\theoremstyle{plain}
\newtheorem{theorem}{Theorem}[section]
\newtheorem{lemma}{Lemma}[section]
\theoremstyle{definition}
\newtheorem{example}{Example}[section]
\newtheorem{definition}{Definition}[section]
\newcommand{\prob}{\mathbb{P}}
\newcommand{\unifdist}{\textsf{Unif}}
\newcommand{\berndist}{\textsf{Bern}}
\newcommand{\expdist}{\textsf{Exp}}
\newcommand{\ind}{{\mathds{1}}}
\newcommand{\gk}{{\arabic{line})}\stepcounter{line}}
\newcounter{line}
\begin{document}


\title{The Fundamental Theorem of Perfect \\ 
  Simulation\footnote{Supported by
  U.S. National Science Foundation grant DMS-1418495}}

\author{Mark Huber \\ {\tt mhuber@cmc.edu}}

\maketitle

\begin{abstract}
Here several perfect simulation algorithms are brought under a single 
framework, and shown to derive from the same probabilistic result, called
here the Fundamental Theorem of Perfect Simulation (FTPS).
An exact simulation algorithm has output according to an input distribution
$\pi$.  Perfect simulations are a subclass of exact simulations where
recursion is used either explicitly or implicitly.  The FTPS gives
two simple criteria that, when satisfied, give a correct perfect simulation
algorithm.
First the algorithm must terminate in finite time
with probability 1.  Second, the algorithm must be locally correct in the 
sense that the algorithm can be proved correct given the assumption that
any recursive call used returns an output from the correct distribution.
This simple idea is surprisingly powerful.  Like other general techniques
such as Metropolis-Hastings for approximate simulation, the FTPS allows
for the flexible construction of existing and new perfect simulation 
protocols.
This theorem can be used to verify the correctness of many perfect simulation
protocols, including Acceptance Rejection,
Coupling From the Past, and Recursive Bernoulli factories.
\end{abstract}

\noindent {\bf MSC} 65C10; 68U20, \\
{\bf Keywords:} {Coupling from the Past},
{Acceptance Rejection},
{exact simulation},
{probabilistic recursion}

\section{Introduction}

Perfect simulation algorithms generate random variates exactly
from a target distribution using a random number of steps. They are
typically used for the same types as problems where Markov chain Monte
Carlo algorithms are employed, such as Bayesian posterior inference and 
approximation algorithms for \#P-complete problems.  Perfect simulation
algorithms are still the only methods known 
to give practical algorithms for exact simulation
from high-dimensional examples such as the Ising model~\cite{proppw1996}.

There exist many different protocols for building perfect simulation
algorithms, including several variations of acceptance/rejection (AR) and
coupling from the past (CFTP)~\cite{proppw1996}.  There are also more
specialized algorithms such as the Recursive Bernoulli 
factory~\cite{huber2016a}.

The purpose of this work is to bring all of these methods under a common
mathematical framework. 
Each of these methods
can be individually proved to be correct. The proofs 
(as well as the algorithmic
stucture) of these protocols share common features, and the goal of this
work is to identify the most important common feature. Once isolated, this
notion is intuitively very compelling, and in fact it is not difficult 
to show that
this feature gives correctness of the algorithm.  With this notion in place,
it becomes straightforward to show correctness of probabilisitic recursive
algorithms such as the Bernoulli factory.

As with Markov chain Monte Carlo approaches, the idea is simple, but the 
applications wide-ranging.  What separates perfect simulation algorithms
from other algorithms is the use of recursion.  That is, after taking a 
step in the process, the algorithm typically calls itself again, perhaps
with different parameter inputs.  

In other words, at each step, the algorithm makes random choices and
transforms the problem into one of simulating from a new distribution
that depends on the random choices made.  If the new distribution puts
probability 1 on a single state, then call this a {\em halting} distribution,
as the algorithm need merely output that single state and then terminate.

To have a correct perfect simulation algorithm for a target distribution
$\pi$, it must be necessary that the algorithm
halts with probability 1.  It turns out that this necessary condition is 
also sufficient:  any recursive probabilistic algorithm that halts with
probability 1 will output from $\pi$ if the recursive distributions are
chosen appropriately.  This is the Fundamental Theorem of perfect 
simulation (FTPS).

\begin{theorem}[Fundamental Theorem of perfect simulation]
\label{THM:FTPS}
Consider the following algorithm
  \begin{center}
\setcounter{line}{1}
\begin{tabular}{rl}
\toprule
\multicolumn{2}{l}{{\sc Perfect\_Simulation}} \\
\multicolumn{2}{l}{\textit{Input: } $\pi_i$, $i$} \\
\midrule
\gk & \hspace*{0em} Draw $U_i \leftarrow \nu_i$ \\
\gk & \hspace*{0em} If $U_i \in A_i$ \\
\gk & \hspace*{1em}   Return $f_i(U_i)$ \\
\gk & \hspace*{0em} Else \\
\gk & \hspace*{1em}   Draw $X \leftarrow 
                            \textsc{Perfect\_Simulation}(\pi_{U_i},i+1)$ \\
\gk & \hspace*{0em}   Return $g_i(X,U_i)$ \\
\bottomrule
\end{tabular}
\end{center}
For all $i$ let 
\begin{equation}
\label{EQN:FTPS}
X_i = f_i(U_i) \ind(U_i \in A_i) + g_i(Y_i,U_i) \ind(U_i \notin A_i),
\end{equation}
where $Y_i \sim \pi_{U_i}$.  If $X_i \sim \pi_i$ for all $i$ and 
the algorithm halts with probability 1, then the output of 
$\textsc{Perfect\_Simulation}(\pi_0,0)$ is exactly $\pi_0$.
\end{theorem}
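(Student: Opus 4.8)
The plan is to sandwich the true execution of the algorithm -- which may descend through arbitrarily many recursion levels -- between finite-depth truncations that are \emph{exactly} correct by hypothesis, and then let the truncation depth tend to infinity; the ``halts with probability $1$'' assumption enters only in controlling the error introduced by truncation. To set up, I would fix one run of $\textsc{Perfect\_Simulation}(\pi_0,0)$ on a single probability space, writing $U_0,U_1,U_2,\dots$ for the successive draws $U_i\sim\nu_i$ (with the target passed to level $i+1$ being $\pi_{U_i}$) and setting $T=\inf\{i\ge 0:U_i\in A_i\}$ for the recursion level at which the algorithm returns. The hypothesis that the algorithm halts with probability $1$ is exactly $\prob(T<\infty)=1$, i.e.\ $\prob(T>n)\to 0$. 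On $\{T<\infty\}$ the returned value is the nested composition $W=g_0(g_1(\cdots g_{T-1}(f_T(U_T),U_{T-1})\cdots,U_1),U_0)$ (read as $f_0(U_0)$ when $T=0$), and the goal is to show $W\sim\pi_0$.

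Next I would introduce, for each $n\ge 0$, a depth-$n$ truncated output $\widehat X^{(n)}_i$: run $\textsc{Perfect\_Simulation}(\pi_i,i)$ normally, except that any recursive call still alive at level $i+n$ is taken to return an independent ideal draw from its current target. Thus $\widehat X^{(0)}_i$ is exactly $X_i$ from \eqref{EQN:FTPS}, and in general $\widehat X^{(n+1)}_i=f_i(U_i)\ind(U_i\in A_i)+g_i(\widehat X^{(n)}_{i+1},U_i)\ind(U_i\notin A_i)$, where $\widehat X^{(n)}_{i+1}$ (given $U_i\notin A_i$) is a depth-$n$ truncation of the level-$(i+1)$ call on target $\pi_{U_i}$, built from randomness independent of $U_i$. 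The first key step is to prove by induction on $n$ that $\widehat X^{(n)}_i\sim\pi_i$ for every $i$: the base case $n=0$ is precisely the hypothesis $X_i\sim\pi_i$, and for the inductive step one conditions on $U_i$, applies the inductive hypothesis at level $i+1$ with (random) target $\pi_{U_i}$ to get $\widehat X^{(n)}_{i+1}\mid U_i\sim\pi_{U_i}$, and then recognizes that $g_i(\widehat X^{(n)}_{i+1},U_i)$ has the same law as $g_i(Y_i,U_i)$ with $Y_i\sim\pi_{U_i}$, so $\widehat X^{(n+1)}_i\stackrel{d}{=}X_i\sim\pi_i$.

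The second key step is a coupling: run $\widehat X^{(n)}_0$ and $W$ off the same draws $U_0,\dots,U_{n-1}$. If $T\le n$ the recursion finishes before depth $n$, no ideal draw is ever inserted, and $\widehat X^{(n)}_0=W$; hence $\{\widehat X^{(n)}_0\neq W\}\subseteq\{T>n\}$, so $|\prob(W\in B)-\prob(\widehat X^{(n)}_0\in B)|\le\prob(T>n)$ for every measurable set $B$ in the output space. Combining the two steps, $|\prob(W\in B)-\pi_0(B)|\le\prob(T>n)\to 0$, and letting $n\to\infty$ gives $\prob(W\in B)=\pi_0(B)$ for all $B$, i.e.\ $W\sim\pi_0$.

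I expect the main obstacle to be bookkeeping rather than any deep idea: making precise what ``$\pi_i$'' means once the target at deeper levels is the random distribution $\pi_{U_{i-1}}$, and correspondingly checking that the hypothesis ``$X_i\sim\pi_i$ for all $i$'' is invoked at the right generality (it must hold for every target that can actually arise) so that the conditional argument in the inductive step is legitimate. The independence of the recursive randomness from $U_i$ -- implicit in the phrase ``the algorithm calls itself'' -- is what keeps the conditioning clean, and I would state it explicitly at the outset.
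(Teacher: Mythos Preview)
Your proposal is correct and follows essentially the same strategy as the paper: show by induction (using the local-correctness hypothesis) that a depth-$n$ truncation with an ideal draw inserted at the bottom has law exactly $\pi_0$, then bound the discrepancy with the actual output by $\prob(T>n)$ and send $n\to\infty$. The paper packages the induction as a single identity for $\pi(C)$ decomposed according to $\{T<i\},\{T=i\},\{T>i\}$ rather than as an explicit coupling of two random variables, but the content is the same.
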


Another way to view the FTPS is that when designing a recursive simulation
algorithm, it is only necessary to verify that the algorithm is locally
correct and halts with probability 1.  In other words, you can assume
that the recursive call correctly generates a draw from the desired distribution
in proving that the algorithm works.  In any other context this would be
circular reasoning, but for 
recursive simulation algorithms that run in finite time
(with probability 1), this is enough to guarantee global 
correctness of the algorithm.

This FTPS idea was first introduced in a text (\cite{huber2015b}).  Here we
generalize the notion as first introduced and expand its application
to several problems that do not appear in~\cite{huber2015b}.
The remainder of the paper is organized as follows.  
Section~\ref{SEC:examples} gives examples of perfect simulation protocols
that fall into this framework, and uses the FTPS to show correctness.
Section~\ref{SEC:proof} then proves the FTPS and discusses various
interpretations.

\section{Perfect simulation protocols}
\label{SEC:examples}

This section shows how FTPS implies the correctness of several
perfect simulation methodologies.

\subsection{Acceptance/rejection}

The acceptance/rejection (AR) protocol (also known as rejection sampling)
was the first widely used perfect simulation method.
\begin{example}[AR for a five sided fair die]
   Suppose that it is possible to draw independently identically 
   distributed samples from a fair six-sided die, and the goal is to 
   simulate uniformly from $\{1,2,3,4,5\}$.
  \begin{center}
\setcounter{line}{1}
\begin{tabular}{rl}
\toprule
\multicolumn{2}{l}{{\sc AR\_example}} \\
\midrule
\gk & \hspace*{0em} Draw $X \leftarrow \unifdist(\{1,2,3,4,5,6\})$ \\
\gk & \hspace*{0em} If $X \in \{1,2,3,4,5\}$ \\
\gk & \hspace*{1em}   return $X$ and halt \\
\gk & \hspace*{0em} Else \\
\gk & \hspace*{1em}   $X \leftarrow \textsc{AR\_example}$ \\
\gk & \hspace*{1em}   return $X$ and halt \\
\bottomrule
\end{tabular}
\end{center}
Here $\unifdist(A)$ denotes the uniform distribution over the set $A$.

This is an example of a recursive algorithm:  it might call itself in
the course of execution.  Note that the algorithm will halt with probability 1.
Because the recursive call to the algorithm is the same as the original
call, it is 
easy to prove correctness.  For $i \in \{1,2,3,4,5\}$ and output $X$:
\begin{align*}
 \prob(X = i) &= 1/6 + (1/6)\prob(X = i).
\end{align*}
Solving then gives $\prob(X = i) = 1/5$ as desired.

The FTPS can be applied to this example as follows.
The target distribution is  
$\pi \sim \unifdist(\{1,\ldots,5\})$.
Using the notation
of the pseudocode in Theorem~\ref{THM:FTPS}, for all $i$, 
$\nu_i \sim \unifdist(\{1,\ldots,6\})$, $A_i = \{1,\ldots,5\}$, 
$\pi_{U_i} \sim \pi$, $f_i(u) = u$ and $g_i(x,u) = x$.  Since at each step
the probability of halting is $5/6$, with probability 1 the algorithm
terminates.  Let
\[
X_i = f_i(U_i)\ind(U_i \in A) + g_i(Y_i,U_i) \ind(U_i \notin A)
 = U_i \ind(U_i \leq 5) + Y_i \ind(U_i = 6)
\]
where $Y_i \sim \pi$.  For $i \in \{1,\ldots,5\}$,
\[
\prob(X_i = i) = \prob(U_i = i) + \prob(Y_i = i)\prob(U_i = 6) = 
 (1/6)+(1/5)(1/6) = 1/5,
\]
so the algorithm is locally correct.
Global correctness
follows immediately from the FTPS.
\end{example}

Any algorithm expressible in pseudocode can also be represented graphically as
a branching process, and Figure~\ref{FIG:fig1} shows this representation
for \textsc{AR\_Example}.
In the figure, $\delta(U)$ represents the Diract delta function that puts all
the probability mass on $U$.  That is,
for $[X|U] \sim \delta(U)$, $\prob(X = U) = 1$.

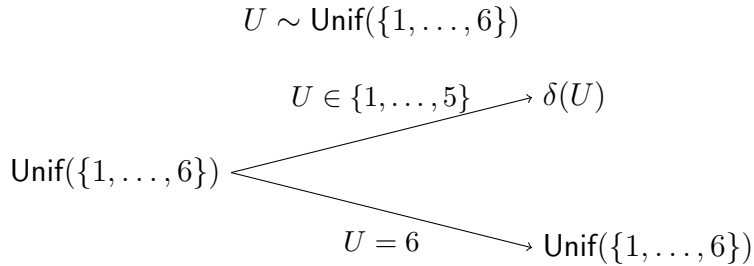
\begin{figure}[ht]
  \begin{center}
    \begin{tikzpicture}
      \draw (2,2) node {$U \sim \unifdist(\{1,\ldots,6\})$};
      \draw[->] (0,0) node[left] {$\unifdist(\{1,\ldots,6\})$} -- 
        node[above=4pt] {\small $U \in \{1,\ldots,5\}$} 
        (4,1) node[right] {$\delta(U)$};
      \draw[->] (0,0) -- node[below=4pt] {\small $U = 6$} (4,-1) node[right] 
           {$\unifdist(\{1,\ldots,6\})$};
    \end{tikzpicture}
  \end{center}
  \caption{Branching process representation of \textsc{AR\_Example}.}
  \label{FIG:fig1}
\end{figure}

\begin{example}[General AR]
  Suppose that $\nu$ is a measure over $B$, and $A$ is a measurable
  subset of $B$ such that $\nu(A) > 0$.  Then general AR samples
  $X \sim \nu$ conditioned to lie in $A$.
  \begin{center}
\setcounter{line}{1}
\begin{tabular}{rl}
\toprule
\multicolumn{2}{l}{{\sc General\_AR}} \\
\midrule
\gk & \hspace*{0em} Draw $X \leftarrow \nu$ \\
\gk & \hspace*{0em} If $X \in A$  \\
\gk & \hspace*{1em}   return $X$ and halt \\
\gk & \hspace*{0em} Else \\
\gk & \hspace*{1em}   $X \leftarrow \textsc{General\_AR}$ \\
\gk & \hspace*{1em}   return $X$ and halt \\
\bottomrule
\end{tabular}
\end{center}
Then as with the simple example, for all $i$, set $\nu_i \sim \nu$,
$A_i = A$, $f_i(u) = u$, and $g_i(x,u) = x$.  Then since 
the chance of halting at each step is $\nu(A) > 0$, the algorithm halts
with probability 1, and FTPS gives correctness.
\end{example}

Careful use of AR
can result in polynomial time algorithms even for high-dimensional examples.
For instance, 
AR can be used to 
sample from weighted permutations for approximating
the permanent of dense matrices~\cite{huber2006a,huber2008a} or 
satisfying assignments of disjuntive normal forms.  For both these 
applications the associated counting problem is 
\#P-complete~\cite{valiant1979,valiant1979b,jerrums1989}

\subsection{Coupling from the past}

There do exist high-dimensional problems where 
the running time of basic AR grows exponentially
with the distribution, thereby rendering the protocol impractical for
these models.  

A canonical example of this is the Ising model.  This model takes
a graph $(V,E)$, and assigns weight $w(x)$ to $x \in \{-1,1\}^V$ of 
$\exp(-\beta H(x))$, where $\beta$ is a parameter of the model, and 
$H(x) = -\sum_{\{i,j\} \in E} x(i)x(j)$ is called the Hamiltonian.  The
probability distribution then becomes,
\[
\pi(\{x\}) = \frac{w(x)}{Z_\beta}, \quad
  Z_\beta = \sum_{y \in \{-1,1\}^V} w(y).
\]

For many decades, dealing with distributions like the Ising model through the 
use of Markov chains 
to generate approximately correct samples was the only method available.
A Markov chain with a particular stationary distribution 
is implemented in a computer simulation via a stationary
update function.
\begin{definition}
  Call $\phi:\Omega \times [0,1]$ a {\em stationary update function} for 
  distribution $\pi$ over $\Omega$ if for $X \sim \pi$ and 
  $U \sim \unifdist([0,1])$, $\phi(X,U) \sim \pi$ as well.
\end{definition}

If $U_0,U_1,\ldots$ are iid $\unifdist([0,1])$, then 
setting $X_0=x_0$ and $X_{t+1} =\phi(X_t,U_t)$ creates a Markov chain,
and it is well known that under mild conditions the distribution of 
$X_t$ will approach $\pi$.  Unfortunately, it is very difficult to 
determine bounds on how quickly the chain approaches distribution $\pi$,
called the {\em mixing time} of the Markov chain.

For some chains, there might exist positive probability that $\phi(x,U)$
is the same state for all $x \in \Omega$.  When $\#\phi(\Omega,U) = 1$, say
that the state space has {\em completely coupled} or {\em coalesced}.

\begin{example}[Coupling from the past]
CFTP~\cite{proppw1996} is 
a perfect simulation protocol designed to use stationary update 
functions to generate samples exactly from the distribution $\pi$.
A recursive formulation of the algorithm is as follows.  Here,
suppose $A$ is a set such that for $u \in A$, $\phi(\Omega,u)$
is a set that consists of only a single state.
  \begin{center}
\setcounter{line}{1}
\begin{tabular}{rl}
\toprule
\multicolumn{2}{l}{{\sc CFTP}} \\
\midrule
\gk & \hspace*{0em} Draw $U \leftarrow \unifdist([0,1])$ \\
\gk & \hspace*{0em} If $U \in A$ \\
\gk & \hspace*{1em}   return the unique element of $\phi(\Omega,U)$ and halt \\
\gk & \hspace*{0em} Else \\
\gk & \hspace*{1em}   $X \leftarrow \textsc{CFTP}$ \\
\gk & \hspace*{1em}   return $\phi(X,U)$ and halt \\
\bottomrule
\end{tabular}
\end{center}
This falls nicely in the FTPS framework.  Here for all $i$,
$\nu_i \sim \unifdist([0,1])$,
$A_i = A$, $f_i(u)$ is the unique element of $\phi(\Omega,u)$, and 
$g_i(x,u) = \phi(x,u)$.
To use the FTPS, it must be true that $\prob(\#\phi(\Omega,U) = 1) > 0$.
Then CFTP terminates with probability 1 and FTPS gives correctness.
\end{example}

Often, a single step in a Markov chain is not enough to have positive 
probability of coalescence.  Note
that for a fixed $t$, composing $\phi$ with itself $t$ times also gives
a stationary update.  Let $\phi_t = \phi \circ \phi \circ \cdots \circ \phi$
denote this $t$-fold 
composition.  The hope is if $t$ is large enough, then 
$\phi_t$ might coalesce.

Like with the mixing time of the chain, finding $t$ exactly is not 
usually possible.  Therefore, Propp and Wilson~\cite{proppw1996} suggested
doubling $t$ at each recursive step.

\begin{example}[Doubling CFTP]
  This method is described using the following pseudocode.
  \begin{center}
\setcounter{line}{1}
\begin{tabular}{rl}
\toprule
\multicolumn{2}{l}{{\sc Doubling\_CFTP}} \\
\multicolumn{2}{l}{\textit{Input: } $t$} \\
\midrule
\gk & \hspace*{0em} Draw $U \leftarrow \unifdist([0,1]^t)$ \\
\gk & \hspace*{0em} If $U \in A$ \\
\gk & \hspace*{1em}   return the unique element of $\phi_t(\Omega,U)$ 
  and halt \\
\gk & \hspace*{0em} Else \\
\gk & \hspace*{1em}   $X \leftarrow \textsc{Doubling\_CFTP}(2t)$ \\
\gk & \hspace*{1em}   return $\phi(X,U)$ and halt \\
\bottomrule
\end{tabular}
\end{center}
Here $\nu_i \sim \unifdist([0,1]^{2^i})$, $f_i(u)$ is the unique 
element of $\phi(\Omega,u)$, $g_i(x,u) = \phi_{2^i}(\Omega,U)$.  
To prove the algorithm terminates with probability 1, it suffices
for there to exist $t$ such that coalescence occurs using $\phi_t$ 
with positive probability.  

With this condition in place, the FTPS says that to prove global correctness,
one can assume that the recursive call in line 6 returns output from the 
correct distribution.  
Assuming $X$ from line 6 is
drawn from $\pi$, then for a measurable set $B$, the probability
that the output of the algorithm is in $B$ is 
\begin{align*}
  \prob(\phi_t(\Omega,U) &\in B|U \in A)\prob(U \in A)
 + \prob(\phi_t(X,U) \in B|U \notin A)\prob(U \notin A) \\
 &= \prob(\phi_t(X,U) \in B) = \pi(B)
\end{align*}
\end{example}

In~\cite{huber2008b}, a more sophisticated scheme for altering $t$
was used to guarantee that the probability that the running time 
was much larger than the mean time decreased exponentially.  Although
the scheme was more complex, it also fits the framework of FTPS and so
correctness immediately follows.

\subsection{Recursive Bernoulli Factory}

Bernoulli factories were introduced in~\cite{asmussengt1992} as
a subroutine needed for perfect simulation from the stationary distribution
of regenerative processes.  Work on constructing efficient and
practical Bernoulli
factories has continued since~\cite{keaneo1994,nacup2005,
latuszynskikpr2011,hubertoappearc}.

A Bernoulli factory works as follows.  Suppose that an iid sequence of Bernoulli
random variables $B_1,B_2,\ldots \sim \berndist(p)$ are available but
$p$ itself is unknown.
The goal is to build a new random variable $X \sim \berndist(p)$ as a 
function of the $\{B_i\}$ together with external randomness 
$U \sim \unifdist([0,1])$ 
that uses as few coin flips as possible.

\begin{definition}
Given $p^* \in (0,1]$ and a function $f:[0,p^*] \rightarrow [0,1]$, 
a {\em Bernoulli factory} is a computable 
function $\cal A$ that takes as input a number $u \in [0,1]$ together
with a sequence of values in $\{0,1\}$, 
and returns an output in $\{0,1\}$
where the following holds.  For any $p \in [0,p^*]$, 
$X_1,X_2,\ldots$ iid $\berndist(p)$, and $U \sim \unifdist([0,1])$,
let $T$ be the infimum of times $t$ such that the value of 
${\cal A}(U,X_1,X_2,\ldots)$ only depends on the values of $X_1,\ldots,X_t$.
Then
\begin{enumerate}
\item{$T$ is a stopping time with respect to the natural
filtration and $\prob(T < \infty) = 1$.}
\item{${\cal A}(U,X_1,X_2,\ldots) \sim \berndist(f(p))$.}
\end{enumerate}
Call $T$ the {\em running time} of the Bernoulli factory.
\end{definition}

Using the perfect simulation notation from earlier, a Bernoulli
factory algorithm is a perfect simulation algorithm for 
$\berndist(f(p))$ such that for all $i$, 
$\nu_i \in \berndist(p)^k \times\unifdist([0,1])$ for some nonnegative
integer $k$.
The state space for $\berndist(f(p))$, is 
$\{0,1\}$, and so it 
holds that $f_i(u) \in \{0,1\}$ and $g_i(x,u) \in \{0,1\}$
for all $i$.

In other words, all of the distributions employed by the perfect simulation
algorithm also must be Bernoulli distributions.  That means that to 
check~\eqref{EQN:FTPS}, it suffices to show that the probability 
the left hand side equals 1 equals the probability the right hand side
is 1, greatly simplifying the calculations.

\begin{example}[Von Neumann's Bernoulli factory]
Von Neumann~\cite{vonneumann1951} constructed a simple Bernoulli factory
where $f(p) = 1/2$ for all $p$.  It utilized two flips of the coin at
each level of recursion, and is represented graphically in 
Figure~\ref{FIG:fig2}.
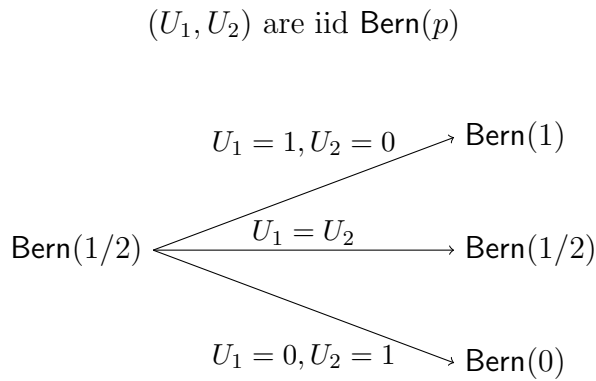
\begin{figure}[ht]
  \begin{center}
    \begin{tikzpicture}
      \begin{scope}[scale=1]
      \draw (2,3) node {$(U_1,U_2)$ are iid $\berndist(p)$};
      \draw[->] (0,0) node[left] {$\berndist(1/2)$} --
        node[above=10pt] {\small $U_1 = 1, U_2 = 0$} 
        (4,1.5) node[right] {$\berndist(1)$};
      \draw[->] (0,0) -- node[below=10pt] {\small $U_1 = 0, U_2 = 1$} 
      (4,-1.5) node[right] 
           {$\berndist(0)$};
      \draw[->] (0,0) -- node[above=-2pt] {\small $U_1 = U_2$} (4,0) node[right]
        {$\berndist(1/2)$};
      \end{scope}
    \end{tikzpicture}
  \end{center}
  \caption{Branching process representation of the Von Neumann constant 
    Bernouli factory.}
  \label{FIG:fig2}
\end{figure}
At each level of the recursion there is a $2p(1-p)$ chance of halting,
so for $p\in(0,1)$ the algorithm terminates in finite time with probability 1.
Moreover,
\[
1/2 = p(1-p)(1) + [p^2 + (1 - p)^2](1/2) + (1-p)p(0),
\]
so the local correctness condition is satisfied.  The algorithm is therefore
correct by the FTPS.
\end{example}

\begin{example}[Exponential Bernoulli factory]
In~\cite{beskospr2006} showed how to build a Bernoulli factory for 
$f(p) = \exp(-p)$, which was needed as part of a method for perfectly
simulating from diffusions.  They created such a factory using a 
thinned Poisson process.

Consider here the slightly more general
problem of drawing from $f(p) = \exp(-Cp)$, where $C$ is a known 
positive constant.  Then using a single coin flip together with
an exponential random variable, the algorithm for this factory is
represented in Figure~\ref{FIG:fig3}.

\begin{figure}[ht]
  \begin{center}
    \begin{tikzpicture}
      \begin{scope}[scale=1]
      \draw (2,3) node {$U_1 \sim \berndist(p)$, $U_2 \sim \expdist(C)$};
      \draw[->] (0,0) node[left] {$\berndist(\exp(-Cp)$} --
        node[above=10pt] {\small $U_2 \geq 1$} 
        (4,1.5) node[right] {$\berndist(1)$};
      \draw[->] (0,0) -- node[below=10pt] 
           {\small $U_2 < 1, U_1 = 0$} 
      (4,-1.5) node[right] 
           {$\berndist(\exp(-C(1-U_2)p)$};
      \draw[->] (0,0) -- 
           (4,0) node[right]
        {$\berndist(0)$};
      \draw (2.5,0.2) node {\small $U_2 < 1, U_1 = 1$} ;
      \end{scope}
    \end{tikzpicture}
  \end{center}
  \caption{Branching process representation of an exponential factory.}
  \label{FIG:fig3}
\end{figure}
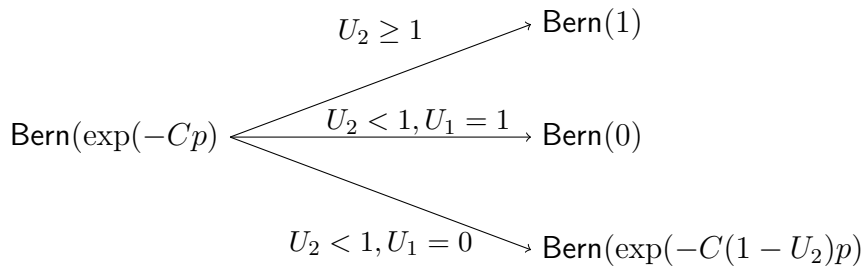

The probability that $U_2 \geq 1$ is $\exp(-C)$.  If $U_2 < 1$ and 
$U_1 = 0$, then the probability of a 1 becomes $\exp(-C(1-U_2)p)$.  
Also, $\prob(U_1 = 0) = 1 - p$.  Therefore,
the right hand side of equation~\eqref{EQN:FTPS} is:
\begin{align*}
\exp(-C) + (1 - p)\int_{u_2 \in [0,1]} C \exp(-C u_2)\exp(-C(1-u_2)p) \ du_2
\end{align*}
which evaluates to $\exp(-Cp)$ as desired.

At each recursive step, there is at least a $\exp(-C)$ chance of 
terminating and so the overall algorithm terminates with probability 1.
Therefore the FTPS immediately gives correctness.

\end{example}

\begin{example}[Linear Bernoulli factory]

The original application of Asmussen et. al~\cite{asmussengt1992} 
required Bernoulli factories of the form $f(p) = Cp$ for a constant
$p$.

Nacu and Peres~\cite{nacup2005} called a randomized algorithm with
random running time $T$ a 
{\em fast simulation} if there existed constants $M > 0$ and 
$\rho < 1$ such that $\prob(T > t) \leq M \rho^t$ for all $t > 0$.  
One of their results was that if $2p$ has a fast simulation, then
any function $f(p)$ that is real analytic over $(0,1)$ has a 
fast simulation.  The converse also holds:  any $f$ with a fast
simulation is real analytic on any open subset of its domain.

For these reasons, the $Cp$ Bernoulli factory is especially important.
The first provably polynomial expected time Bernoulli factory for $Cp$ coins was
introduced in~\cite{huber2016a}, and was an explicitly recursive 
perfect simulation algorithm.  It was shown there that the expected
number of coin flips needed was bounded above by
\[
9.5 C \epsilon^{-1}.
\]

It was also shown in~\cite{huber2016a} that any Bernoulli factory that
worked for all $p$ and $C$ such that $Cp \in [0,1-\epsilon]$ required 
at least
\[
0.004C\epsilon^{-1}
\]
flips of the coin on average.  Hence the algorithm of~\cite{huber2016a}
is the best possible up to the constant.

The Bernoulli factory of~\cite{huber2016a} actually solves the more general
problem of flipping a $(Cp)^i$ coin for any integer $i$.  Of course, 
$i = 1$ is the case of actual interest, but the factory works for any
integer $i \geq 1$.

This algorithm can be represented using
three types of recursions.  In the first recursion (Figure~\ref{FIG:fig4}),
a single $p$-coin is flipped.  If it is heads, then the algorithm
halts and outputs a 1, otherwise it changes the problem to flipping
a $(C-1)p/(1-p)$ coin.

\begin{figure}[ht]
  \begin{center}
    \begin{tikzpicture}
      \begin{scope}[scale=1]
      \draw (3,3) node {$U_1 \sim \berndist(p)$};
      \draw[->] (0,0) node[left] {$\berndist((Cp)^i)$} --
        node[above=10pt] {\small $U_1 = 1$} 
        (4,1.5) node[right] {$\berndist((Cp)^{i-1})$};
      \draw[->] (0,0) -- node[below=10pt] 
           {\small $U_1 = 0$} 
      (4,-1.5) node[right] 
           {$\berndist((Cp)^{i-1}(C-1)p/(1-p))$};
      \end{scope}
    \end{tikzpicture}
  \end{center}
  \caption{The first piece of the recursive Bernoulli factory.}
  \label{FIG:fig4}
\end{figure}

When $i = 0$, the goal is just to flip a $\berndist(1)$-coin, which
is always 1, and so this is a halting state.

The second piece attempts to turn a $(C-1)p/(1-p)$-coin flip
problem back into a $Cp$-coin flip problem.  This is done by flipping
a $(C-1)/C$-coin.  If heads, then it is necessary to flip one 
$Cp$-coin.  Otherwise, it is necessary to flip both one $Cp$-coin and
still one $(C-1)p/(1-p)$-coin.  This step can be repeated (a geometrically
distributed number of times) until the $(C-1)p/(1-p)$-coin flip is gone.
This is represented in Figure~\ref{FIG:fig5}.

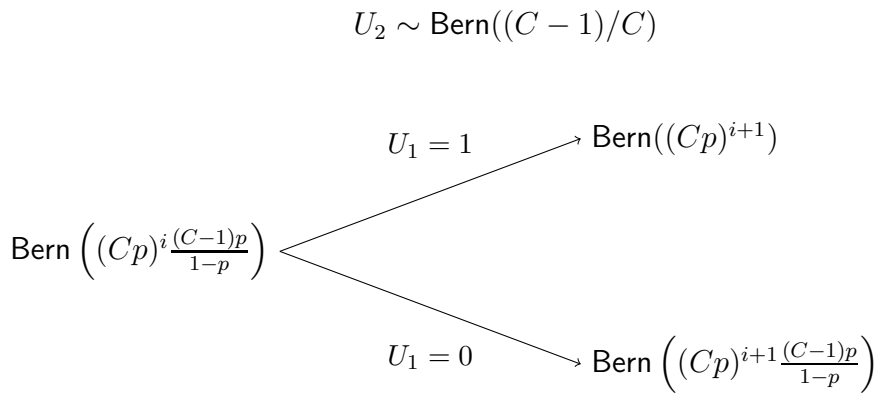
\begin{figure}[ht]
  \begin{center}
    \begin{tikzpicture}
      \begin{scope}[scale=1]
      \draw (3,3) node {$U_2 \sim \berndist((C-1)/C)$};
      \draw[->] (0,0) node[left] {$\berndist\left((Cp)^i 
        \frac{(C-1)p}{1-p}\right)$} --
        node[above=10pt] {\small $U_1 = 1$} 
        (4,1.5) node[right] {$\berndist((Cp)^{i+1})$};
      \draw[->] (0,0) -- node[below=10pt] 
           {\small $U_1 = 0$} 
      (4,-1.5) node[right] 
           {$\berndist\left((Cp)^{i+1}\frac{(C-1)p}{1-p}\right)$};
      \end{scope}
    \end{tikzpicture}
  \end{center}
  \caption{The second piece of the recursive Bernoulli factory.}
  \label{FIG:fig5}
\end{figure}

The third and final piece works for any function $g(p)$ and 
parameter $\alpha$ such that 
$g(p) \leq \alpha$.  It flips an $\alpha$-coin.  If this is 
tails, then the overall output is tails.  If it is heads, then
a $\alpha^{-1} g(p)$-coin must be flipped.

\begin{figure}[ht]
  \begin{center}
    \begin{tikzpicture}
      \begin{scope}[scale=1]
      \draw (3,3) node {$U_3 \sim \berndist(\alpha)$};
      \draw[->] (0,0) node[left] {$\berndist(g(p))$} --
        node[above=10pt] {\small $U_1 = 1$} 
        (4,1.5) node[right] {$\berndist(\alpha^{-1}g(p))$};
      \draw[->] (0,0) -- node[below=10pt] 
           {\small $U_1 = 0$} 
      (4,-1.5) node[right] 
           {$\berndist(0)$};
      \end{scope}
    \end{tikzpicture}
  \end{center}
  \caption{The last piece of the recursive Bernoulli factory.}
  \label{FIG:fig6}
\end{figure}

These pieces are combined as follows.  Begin with $i = 1$, and use
the first piece to either move to $i=0$ (which halts) or to a 
$(C-1)p/(1-p)$.  Use the second piece to replace the $(C-1)p/(1-p)$-coin
with a geometric (with mean $C/(C-1)$) number of $Cp$-coins.  Continue
until $i = 0$ or $i \geq 4.6\epsilon^{-1}$.  At this point, for
$Cp \leq 1 - \epsilon$, $(Cp)^i \leq \alpha = 1/(1+\epsilon/2)^i$, and
so the third piece of the recursion can be employed.  Reset 
$\epsilon$ to be $\epsilon/2$, $C$ to be $C(1 + \epsilon/2)$, and return
to the earlier stage until once again $i = 0$ or $i \geq 4.6\epsilon^{-1}$.
Continue until termination occurs.
Theorem 1 of~\cite{huber2016a} showed that the expected running time
of this algorithm was at most $9.5C\epsilon^{-1}$.  

\begin{lemma}
The algorithm is a correct Bernoulli factory.
\end{lemma}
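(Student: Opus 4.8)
The plan is to verify the two hypotheses of the FTPS (Theorem~\ref{THM:FTPS}) for the combined algorithm: termination with probability~1, and local correctness of each recursive step. Since the paper has already cited Theorem~1 of~\cite{huber2016a} to bound the expected running time by $9.5 C \epsilon^{-1} < \infty$, termination with probability~1 is immediate (a nonnegative random variable with finite mean is finite almost surely). So the substance of the proof is the local correctness check, applied separately to each of the three recursive pieces in Figures~\ref{FIG:fig4}--\ref{FIG:fig6}, since the combined algorithm is just a bookkeeping composition of these pieces (together with the halting state $i=0$, where $\berndist((Cp)^0) = \berndist(1)$ outputs $1$ deterministically, and the parameter resets, which do not themselves consume coins).

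First I would handle the first piece (Figure~\ref{FIG:fig4}). Here the target is $\berndist((Cp)^i)$, we flip $U_1 \sim \berndist(p)$, and there is no genuine acceptance set — the algorithm always recurses, so $A_i = \emptyset$ and $g_i$ depends on whether $U_1 = 1$ or $U_1 = 0$. Assuming the recursive call returns a correct draw, the probability the output is~$1$ is $p \cdot (Cp)^{i-1} + (1-p) \cdot (Cp)^{i-1}\frac{(C-1)p}{1-p}$, and I would simplify this to $(Cp)^{i-1}\big(p + (C-1)p\big) = (Cp)^{i-1} \cdot Cp = (Cp)^i$, confirming local correctness. Second, for the piece in Figure~\ref{FIG:fig5}, the target is $\berndist\!\left((Cp)^i \frac{(C-1)p}{1-p}\right)$, we flip $U_2 \sim \berndist((C-1)/C)$, and again always recurse: the output-is-$1$ probability is $\frac{C-1}{C}(Cp)^{i+1} + \frac{1}{C}(Cp)^{i+1}\frac{(C-1)p}{1-p}$, which I would factor as $(Cp)^{i+1}\frac{C-1}{C}\big(1 + \frac{p}{1-p}\big) = (Cp)^{i+1}\frac{C-1}{C}\cdot\frac{1}{1-p} = (Cp)^{i+1}\frac{(C-1)}{C(1-p)}$; then using $(Cp)^{i+1} = Cp \cdot (Cp)^i$ this is $(Cp)^i \frac{p(C-1)}{1-p}$, as required. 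Third, for the piece in Figure~\ref{FIG:fig6} with target $\berndist(g(p))$ and threshold $\alpha \geq g(p)$: flipping $U_3 \sim \berndist(\alpha)$, if $U_3 = 0$ output~$0$ (so here $A_i$ could be taken as the event $U_3=0$ with $f_i \equiv 0$, or equivalently $g_i \equiv 0$), and if $U_3 = 1$ recurse to a $\berndist(\alpha^{-1}g(p))$ call; the output-is-$1$ probability is $\alpha \cdot \alpha^{-1} g(p) + (1-\alpha)\cdot 0 = g(p)$, done. At each stage the relevant subcall is itself one of these three types (in particular the $\alpha^{-1}g(p)$ call with $g(p) = (Cp)^i$ feeds back into the first piece with the reset parameters), so each recursive invocation is locally correct.

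The only remaining point is to argue that assembling these locally correct pieces is legitimate under the FTPS. The subtlety is that the FTPS as stated in Theorem~\ref{THM:FTPS} is phrased for a single recursive routine indexed by a depth counter~$i$, whereas here we have several mutually recursive routines and an index that is a tuple (the current exponent, the current $C$, the current $\epsilon$, and which of the three pieces we are in). I would address this by observing that one can fold all of this into a single ``state'' that plays the role of the index in Theorem~\ref{THM:FTPS}: the distribution $\pi_{\text{state}}$ is the $\berndist(\cdot)$ dictated by the current parameters, the draw $\nu$ is the appropriate product of $p$-coins and uniforms, and the maps $f$, $g$, and acceptance set $A$ are read off the relevant figure. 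With that reinterpretation, every recursive call returns a draw from a well-defined $\berndist(\cdot)$ target, local correctness is exactly the three computations above, termination holds by the finite-expectation bound, and the conclusion of the FTPS is precisely that the top-level call (started at $i=1$, with the original $C$ and $\epsilon$) outputs $\berndist((Cp)^1) = \berndist(Cp)$, which is the definition of a correct $Cp$ Bernoulli factory. The main obstacle, then, is not any hard calculation — the three algebraic identities are routine — but rather making this ``fold the whole recursive structure into the FTPS index'' step precise enough to be convincing while citing only Theorem~\ref{THM:FTPS} and the running-time bound from~\cite{huber2016a}.
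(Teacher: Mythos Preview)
Your proposal is correct and follows essentially the same approach as the paper: verify the three local-correctness identities (one for each piece of the recursion), invoke the finite expected running time from~\cite{huber2016a} for almost-sure termination, and conclude via the FTPS. The paper's proof simply writes down the three identities and declares them straightforward, whereas you carry out the algebra explicitly and add a paragraph about folding the mutually recursive structure into the single-index form of Theorem~\ref{THM:FTPS}; this last point is a legitimate technicality the paper leaves implicit, but it does not change the substance of the argument.
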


\begin{proof}
To show global correctness, it suffices to first show local correctness
for the three pieces of the recursion.  Since Bernoulli distributions
are determined by their mean, that is equivalent to verifying
\begin{align*}
(Cp)^i &= p(Cp)^{i-1}+(1-p)(Cp)^{i-1}(C-1)p/(1-p) \\
(Cp)^i\frac{(C-1)p}{1-p} &=
  \frac{C-1}{C} (Cp)^{i+1} + \frac{1}{C} (Cp)^{i-1} \frac{(C-1)p}{1-p} \\
g(p) &= \alpha \cdot \alpha^{-1} g(p) + (1 - \alpha) \cdot 0
\end{align*}
Each of these results is straightforward to verify.

Since the expected running time of the algorithm is finite, the algorithm
terminates with probability 1, and so the FTPS immediately gives that 
the algorithm is correct. 
\end{proof}

\end{example}

\section{Proof of the FTPS}
\label{SEC:proof}

Let $X$ denote the output of $\textsc{Perfect\_Simulation}(\pi,0)$.  Let $T$
denote the largest value of $i$ attained during recursive calls to 
the algorithm.  Then the 
assumption that the algorithm halts with probability 1 is equivalent to 
saying that the 
probability $T$ is finite is 1.  The following tells us how close
the output distribution is to the target after a finite number of steps.

\begin{lemma}
Suppose equation~\eqref{EQN:FTPS} holds, and for all $i$ let 
$Y_i \sim \pi_{U_i}$.  Then 
for all $i$ and measurable $C$,
\begin{equation}
\label{EQN:finite}
\pi(C) = \prob(X \in C,T < i) + 
  \prob(f_i(U_i) \in C,T = i) + \prob(g_i(Y_i,U_i) \in C,T > i).
\end{equation}
\end{lemma}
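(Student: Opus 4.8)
The plan is to prove equation~\eqref{EQN:finite} by induction on $i$. The base case $i=0$ is essentially tautological: the event $\{T < 0\}$ is empty, the event $\{T = 0\}$ is the event $\{U_0 \in A_0\}$ that the first draw lands in the accepting region, and $\{T > 0\}$ is its complement $\{U_0 \notin A_0\}$. On $\{T=0\}$ the output is exactly $f_0(U_0)$, while on $\{T>0\}$ the algorithm recurses and the eventual output $X$ has the form $g_0(X', U_0)$ where $X'$ is the output of the recursive call. Since equation~\eqref{EQN:FTPS} with $i=0$ asserts $X_0 = f_0(U_0)\ind(U_0\in A_0) + g_0(Y_0,U_0)\ind(U_0\notin A_0) \sim \pi_0 = \pi$, and since $Y_0 \sim \pi_{U_0}$ has exactly the distribution that the recursive call is \emph{supposed} to produce, the right-hand side of~\eqref{EQN:finite} at $i=0$ reads $\prob(f_0(U_0)\in C, U_0\in A_0) + \prob(g_0(Y_0,U_0)\in C, U_0\notin A_0) = \prob(X_0 \in C) = \pi(C)$. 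The subtlety to be careful about here is that $X$ (the actual algorithm output) and $Y_0$ (an idealized draw from $\pi_{U_0}$) are different objects; the point of the $i=0$ identity is precisely that we do not yet need to know they have the same law — we only need to split off the $T=0$ piece and leave the rest as $\prob(g_0(Y_0,U_0)\in C, T>0)$, which at this stage is just notation for the contribution of the non-halting branch with $Y_0$ standing in for the recursive output.

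Next I would carry out the inductive step: assuming~\eqref{EQN:finite} holds for $i$, derive it for $i+1$. The only term that needs to be refined is the last one, $\prob(g_i(Y_i, U_i) \in C, T > i)$. Condition on $U_i = u$ for $u \notin A_i$; then $Y_i \sim \pi_u$, and the event $\{T > i\}$ forces the algorithm to have made a recursive call \textsc{Perfect\_Simulation}$(\pi_u, i+1)$. Now apply the inductive hypothesis~\eqref{EQN:finite} \emph{to that recursive subproblem} — i.e., with $\pi$ replaced by $\pi_u$, index shifted appropriately, and with the measurable set taken to be $g_i(\cdot, u)^{-1}(C)$, the preimage of $C$ under the map $x \mapsto g_i(x,u)$. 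This expresses $\prob(Y_i \in g_i(\cdot,u)^{-1}(C))$ — equivalently $\pi_u(g_i(\cdot,u)^{-1}(C))$ — as a sum of three terms involving the subproblem's output at depth one, at depth exactly one, and beyond. Substituting this back and integrating over $u \notin A_i$ against the law of $U_i$, the three pieces reassemble: the "depth $<1$ within the subproblem" piece merges with $\prob(X \in C, T < i)$ to give $\prob(X \in C, T < i+1)$; the "depth exactly $1$" piece becomes $\prob(g_i(Y_i,U_i)\in C,\, U_i\notin A_i,\, U_{i+1}\in A_{i+1})$, which after recognizing that on $\{T=i+1\}$ the output equals $g_i(f_{i+1}(U_{i+1}), U_i)$ and unwinding one application of~\eqref{EQN:FTPS} at index $i+1$ is exactly $\prob(f_i(\cdot)\ldots)$ — more precisely it contributes the $\prob(f_{i+1}(U_{i+1})\in C, T = i+1)$ term; and the "beyond" piece becomes $\prob(g_{i+1}(Y_{i+1},U_{i+1})\in C, T > i+1)$.

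The key algebraic engine making the reassembly work is exactly hypothesis~\eqref{EQN:FTPS} applied at a single index: for any fixed $u \notin A_i$, the law of $g_i(Y_i, u)$ with $Y_i \sim \pi_u$ is the pushforward of $\pi_u$ under $g_i(\cdot, u)$, and~\eqref{EQN:FTPS} at index $i$ tells us how these pushforwards, mixed over $u$ according to $\nu_i$ together with the accepting-branch contribution $f_i(U_i)\ind(U_i\in A_i)$, recombine into $\pi_i = \pi$ — but one level up. Threading this through every level is what turns a local statement into the telescoping identity~\eqref{EQN:finite}. I would state the induction cleanly as: for the subproblem started at $(\pi_u, i+1)$, the analog of~\eqref{EQN:finite} at its own "depth $i$" (absolute depth $2i+1$, but better to reindex the subproblem to start at $0$) holds by the inductive hypothesis, because the subproblem is structurally identical and~\eqref{EQN:FTPS} holds "for all $i$" by assumption.

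The main obstacle I anticipate is purely bookkeeping: correctly aligning the depth index of the recursive subproblem with the running index in~\eqref{EQN:finite}, and being scrupulous about the distinction between the genuine algorithm output $X$ (whose distribution we are trying to pin down) and the auxiliary variables $Y_i \sim \pi_{U_i}$ that appear in~\eqref{EQN:FTPS} and~\eqref{EQN:finite} as \emph{stand-ins} for recursive outputs. These stand-ins are not assumed to have the algorithm's output distribution — if they were, there would be nothing to prove — so the induction must be phrased so that at step $i$ we have only "peeled off" the behavior down to depth $i$ and the $Y_i$ term honestly captures everything below, with its distributional identity to the true tail established only in the limit $i \to \infty$ once we know $\prob(T<\infty)=1$. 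Keeping that separation straight, and making sure the measurability of $u \mapsto g_i(\cdot,u)^{-1}(C)$ and the disintegration of $U_i$ are invoked correctly, is the delicate part; the actual identities at each step are one-line verifications.
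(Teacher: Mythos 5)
Your base case is fine and matches the paper's. Your inductive step, however, takes a genuinely different route from the paper and as written contains an indexing gap.

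The paper's inductive step never descends into the subproblem. Assuming~\eqref{EQN:finite} at $i$, it observes $\prob(X\in C,T<i+1)=\prob(X\in C,T<i)+\prob(f_i(U_i)\in C,T=i)$, substitutes the inductive hypothesis to get $\pi(C)=\prob(X\in C,T<i+1)+\prob(g_i(Y_i,U_i)\in C,T>i)$, and then splits the remaining tail term by invoking the assumption~\eqref{EQN:FTPS} at index $i+1$ (not the inductive hypothesis): since $Y_i\sim\pi_{U_i}\sim\pi_{i+1}$, one can replace $Y_i$ in law by $f_{i+1}(U_{i+1})\ind(U_{i+1}\in A_{i+1})+g_{i+1}(Y_{i+1},U_{i+1})\ind(U_{i+1}\notin A_{i+1})$, which peels off exactly one more layer. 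No conditioning, no restatement for subproblems, no reindexing.

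Your plan instead conditions on $U_i=u$, passes to the subproblem $\textsc{Perfect\_Simulation}(\pi_u,i+1)$, and claims to apply the \emph{inductive hypothesis} to that subproblem. Here is where the indices do not line up. You only need to split $\{T>i\}$ into $\{T=i+1\}$ and $\{T>i+1\}$, i.e.\ go one absolute level deeper. Applying the subproblem analogue of~\eqref{EQN:finite} at its own depth $i$ (as you say, quoting ``absolute depth $2i+1$'') overshoots by $i$ levels and produces terms that do not appear in the depth-$(i+1)$ statement; applying it at subproblem depth one (as the phrase ``at depth one, at depth exactly one, and beyond'' suggests) still overshoots by one. What actually does the job is the subproblem's \emph{base case} — depth zero — and that base case for the subproblem is nothing other than the hypothesis~\eqref{EQN:FTPS} at index $i+1$. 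In other words, the correct version of your split is not an application of the inductive hypothesis at all; it collapses exactly into the paper's one-line use of~\eqref{EQN:FTPS}. (A different, internally consistent version of the ``descend into the subproblem'' strategy would condition on $U_0$ instead of $U_i$, and apply a strengthened lemma — stated for arbitrary starting index — at subproblem depth $i$, giving absolute depth $i+1$. But that is not what you wrote, and it requires explicitly strengthening the lemma statement, which you flag but do not carry out.) So: right flavor of argument, correct base case, but the inductive step as described mixes the inductive hypothesis with the base case for the subproblem and the depths do not align.
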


\begin{proof}  The proof proceeds by induction.
Start with the $i = 0$ case.  Then always $T \geq 0$, so the 
first term on the right hand side is 0.  For $T = 0$, it must hold
that $U_0 \in A_0$.  
Since $\pi_0 \sim \pi$, 
equation~\eqref{EQN:finite} becomes
\[
\pi_0(C) = \prob(f(U_0) \in C,U_0 \in A_0) + 
  \prob(g_0(U_0,Y_0) \in C,U_0 \notin A_0) = \prob(X_0 \in C).
\]
By equation~\eqref{EQN:FTPS} this holds.

Our induction hypothesis assumes~\eqref{EQN:finite} holds 
for $i$, and consider what happens with $i + 1$:
\begin{align*}
\prob(X \in C,T < i+1) &= 
  \prob(X \in C,T < i) + \prob(X \in C, T = i) \\
 &= \prob(X \in C,T < i) + \prob(f_i(U_i) \in C,T = i) \\
 &= \pi(C) - \prob(g_{i}(Y_i,U_i) \in C,T > i)
\end{align*}
where the last step is our induction hypothesis.  Rearranging gives
\[
\pi(C) = \prob(X \in C,T < i+1) + 
 \prob(g_{i}(Y_i,U_i) \in C,T > i).
\]

To understand the second term on the right, note 
$\pi_{U_i} \sim \pi_{i+1}$, so by~\eqref{EQN:finite}
\[
Y_{i} \sim X_{i+1} = f_{i+1}(U_{i+1})\ind(U_{i+1} \in A_{i+1}) + 
            g_{i+1}(Y_{i+1},U_{i+1})\ind(U_{i+1} \notin A_{i+1}).
\]
That implies that 
\[
\prob(g_i(Y_i,U_i)\in C,T > i) =
 \prob(f_{i+1}(U_{i+1}) \in C,T = i) + 
 \prob(g_{i+1}(Y_{i+1},U_{i+1}) \in C,T > i),
\]
which completes the induction.
\end{proof}

This leads to a simple bound on the output probabilities.

\begin{lemma}
For all measurable $C$ and $i$,
\begin{equation}
\label{EQN:bound}
\prob(X \in C,T < i) \leq \pi(C) \leq \prob(X \in C,T < i) +
 \prob(T \geq i). 
\end{equation}
\end{lemma}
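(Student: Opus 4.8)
The plan is to derive inequality~\eqref{EQN:bound} directly from the previous lemma by discarding two nonnegative terms from equation~\eqref{EQN:finite}. First I would recall that the three terms on the right-hand side of~\eqref{EQN:finite}, namely $\prob(X \in C, T < i)$, $\prob(f_i(U_i) \in C, T = i)$, and $\prob(g_i(Y_i,U_i) \in C, T > i)$, are each probabilities and hence nonnegative, and they sum to $\pi(C)$. Dropping the last two immediately gives $\prob(X \in C, T < i) \le \pi(C)$, which is the left inequality.

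For the right inequality, I would bound the two discarded terms from above. The key observation is that $\prob(f_i(U_i) \in C, T = i) \le \prob(T = i)$ and $\prob(g_i(Y_i,U_i) \in C, T > i) \le \prob(T > i)$, since intersecting an event with $\{f_i(U_i) \in C\}$ or $\{g_i(Y_i,U_i) \in C\}$ can only decrease its probability. Adding these gives $\prob(f_i(U_i) \in C, T = i) + \prob(g_i(Y_i,U_i) \in C, T > i) \le \prob(T = i) + \prob(T > i) = \prob(T \ge i)$. Substituting into~\eqref{EQN:finite} yields
\[
\pi(C) \le \prob(X \in C, T < i) + \prob(T \ge i),
\]
which is the right inequality.

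I do not anticipate any real obstacle here; the argument is a two-line consequence of the preceding lemma together with the trivial monotonicity of probability measures. The only thing to be mildly careful about is the bookkeeping of the events $\{T = i\}$ versus $\{T > i\}$ and confirming they partition $\{T \ge i\}$, but that is immediate. This lemma then sets up the final step of the proof of the FTPS: letting $i \to \infty$ and using that $\prob(T < \infty) = 1$ forces $\prob(T \ge i) \to 0$ and $\prob(X \in C, T < i) \to \prob(X \in C)$, so that $\pi(C) = \prob(X \in C)$ for all measurable $C$.
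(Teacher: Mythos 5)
Your proof is correct and follows essentially the same route as the paper: drop the two rightmost terms of equation~\eqref{EQN:finite} (each nonnegative) for the lower bound, and bound them above by $\prob(T = i) + \prob(T > i) = \prob(T \geq i)$ for the upper bound. In fact your write-up is slightly cleaner than the paper's one-line proof, which contains an apparent typo (``bounded below by 1'' where ``bounded below by 0'' is meant).
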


\begin{proof}
The two rightmost terms in~\eqref{EQN:finite} are bounded below by 1,
and above by $\prob(T \geq i)$, which gives the bound.
\end{proof}

With this bound in hand, the FTPS can now be proved.

\begin{proof}[Proof of the FTPS]
Let $C$ be any measurable set.
Simply take the limit as $i$ goes to infinity of~\eqref{EQN:bound}.  
If $\prob(T < \infty) = 1$, then by
the Dominated Convergence Theorem, this gives
\[
\prob(X \in C) \leq \pi(C) \leq \prob(X \in C),
\]
which implies $\prob(X \in C) = \pi(C).$
\end{proof}


\begin{thebibliography}{10}

\bibitem{asmussengt1992}
S.~Asmussen, P.~W. Glynn, and H.~Thorisson.
\newblock Stationarity detection in the initial transient problem.
\newblock {\em ACM Trans. Modeling and Computer Simulation}, 2(2):130--157,
  1992.

\bibitem{beskospr2006}
A.~Beskos, O.~Papspiliopoulous, and G.~O. Roberts.
\newblock Retrospective exact simulation of diffusion sample paths with
  applications.
\newblock {\em Bernoulli}, 12(6):1077--1098, 2006.

\bibitem{huber2006a}
M.~Huber.
\newblock Exact sampling from perfect matchings of dense regular bipartite
  graphs.
\newblock {\em Algorithmica}, 44:183--193, 2006.

\bibitem{huber2008b}
M.~Huber.
\newblock Perfect simulation with exponential tails.
\newblock {\em Random Structures Algorithms}, 33(1):29--43, 2008.

\bibitem{hubertoappearc}
M.~Huber.
\newblock A {B}ernoulli mean estimate with known relative error distribution.
\newblock {\em Random Structures Algorithms}, 2016.
\newblock {a}rXiv:1309.5413. To appear.

\bibitem{huber2016a}
M.~Huber.
\newblock Nearly optimal {B}ernoulli factories for linear functions.
\newblock {\em Combin. Probab. Comput.}, 25(4):577--591, 2016.
\newblock {a}rXiv:1308.1562.

\bibitem{huber2008a}
M.~Huber and J.~Law.
\newblock Fast approximation of the permanent for very dense problems.
\newblock In {\em Proc. of 19th ACM-SIAM Symp. on Discrete Alg.}, pages
  681--689, 2008.

\bibitem{huber2015b}
Mark~L. Huber.
\newblock {\em Perfect {S}imulation}.
\newblock Number 148 in Chapman \& Hall/CRC Monographs on Statistics \& Applied
  Probability. CRC Press, 2015.

\bibitem{jerrums1989}
M.~Jerrum and A.~Sinclair.
\newblock Approximating the permanent.
\newblock {\em J. Comput.}, 18:1149--1178, 1989.

\bibitem{keaneo1994}
M.~S. Keane and G.~L. O'Brien.
\newblock A {B}ernoulli factory.
\newblock {\em ACM Trans. Modeling and Computer Simulation}, 4:213--219, 1994.

\bibitem{latuszynskikpr2011}
K.~{\L}atuszy\'nski, I.~Kosmidis, O.~Papspiliopoulos, and G.O. Roberts.
\newblock Simulating events of unknown probabilities via reverse time
  martingales.
\newblock {\em Random Structures Algorithms}, 38(4):441--452, 2011.

\bibitem{nacup2005}
S.~Nacu and Y.~Peres.
\newblock Fast simulation of new coins from old.
\newblock {\em Ann. Appl. Probab.}, 15(1A):93--115, 2005.

\bibitem{proppw1996}
J.~G. Propp and D.~B. Wilson.
\newblock Exact sampling with coupled {M}arkov chains and applications to
  statistical mechanics.
\newblock {\em Random Structures Algorithms}, 9(1--2):223--252, 1996.

\bibitem{valiant1979}
L.~G. Valiant.
\newblock The complexity of computing the permanent.
\newblock {\em Theoret. Comput. Sci.}, 8:189--201, 1979.

\bibitem{valiant1979b}
L.~G. Valiant.
\newblock The complexity of enumeration and reliability problems.
\newblock {\em SIAM Journal on Computing}, 8:410--421, 1979.

\bibitem{vonneumann1951}
J.~von Neumann.
\newblock Various techniques used in connection with random digits.
\newblock In {\em Monte Carlo Method}, Applied Mathematics Series 12,
  Washington, D.C., 1951. National Bureau of Standards.

\end{thebibliography}
\end{document}